\topskip \setlength{\parindent}{0pt} \setlength{\parskip}{5pt plus
\numberwithin{equation}{section}
\newtheorem{theorem}{Theorem}[section]
\newtheorem{conjecture}[theorem]{Conjecture}
\newtheorem{remark}[theorem]{Remark}
\newtheorem{problem}[theorem]{Problem}
\begin{document}

\pagenumbering{arabic}
\pagestyle{headings}

\title{On a conjecture of a logarithmically completely monotonic function}
\maketitle
\begin{center}
Valmir Krasniqi$^a$ and Armend Sh. Shabani$^a$

$^a$Department of Mathematics, University of Prishtina\\ Avenue
"Mother Theresa", 5 Prishtine 10000, Republic of Kosova

{\tt vali.99@hotmail.com, armend\_shabani@hotmail.com}

\end{center}

%===========================================================================
\section*{Abstract}

In this short note we prove a conjecture for the interval $(0,1)$, related to a logarithmically completely monotonic function, presented in \cite{BG}. Then, we extend by proving a more generalized theorem. At the end we pose an open problem on a logarithmically completely monotonic function involving $q$-Digamma function.

{\bf Key words}: completely monotonic, logarithmically completely monotonic

\noindent {\bf 2000 Mathematics Subject Classification}: 33D05,
26D07
%===========================================================================
\section{Introduction}
Recall from~\cite[Chapter~XIII]{mpf-1993}, \cite[Chapter~1]{Schilling-Song-Vondracek-2010} and \cite[Chapter~IV]{widder} that a function $f$ is said to be completely monotonic on an interval $I$ if $f$ has derivatives of all orders on $I$
and satisfies
\begin{equation}\label{CM-dfn}
0\le(-1)^{n}f^{(n)}(x)<\infty
\end{equation}
for $x\in I$ and $n\ge0$.
The celebrated Bernstein-Widder's Theorem (see \cite[p.~3, Theorem~1.4]{Schilling-Song-Vondracek-2010} or \cite[p.~161, Theorem~12b]{widder}) characterizes that a necessary and sufficient condition that $f$ should be completely monotonic for $0<x<\infty$ is that
\begin{equation} \label{berstein-1}
f(x)=\int_0^\infty e^{-xt}d\alpha(t),
\end{equation}
where $\alpha(t)$ is non-decreasing and the integral converges for $0<x<\infty$. This expresses that a completely monotonic function $f$ on $[0,\infty)$ is a Laplace transform of the measure $\alpha$.
\par
It is common knowledge that the classical Euler's gamma function $\Gamma(x)$ may be defined for $x>0$ by
\begin{equation}\label{gamma}
\Gamma(x)=\int_0^\infty t^{x-1}e^{-t} dt.
\end{equation}
The logarithmic derivative of $\Gamma(x)$, denoted by $\psi(x)=\frac{\Gamma'(x)}{\Gamma(x)}$, is called psi function or digamma function.
\par
An alternative definition of the gamma function $\Gamma(x)$ is
\begin{equation}\label{eqgammap2}
\Gamma(x) =\lim_{p\rightarrow\infty}\Gamma_p(x),
\end{equation}
where
\begin{equation}\label{eqgammap1}
\Gamma_p(x)=\frac{p!p^x}{x(x+1)\dotsm(x+p)}=\frac{p^x}{x(1+{x}/{1})\dotsm(1+{x}/{p})}
\end{equation}
for $x>0$ and $p\in\mathbb{N}$. See~\cite[p.~250]{Apostol-Number}. The $p$-analogue of the psi function $\psi(x)$ is defined as the
logarithmic derivative of the $\Gamma_p$ function, that is,
\begin{equation}\label{psi_p1}
\psi_p(x)=\frac{d}{ dx} \ln \Gamma_p(x)=\frac{\Gamma'_p(x)}{\Gamma_p(x)}.
\end{equation}
The function $\psi_p$ has the following properties (see~\cite[p.~374, Lemma~5]{MC-Krashiqi-Mansour-Shabani} and \cite[p.~29, Lemma~2.3]{VA}).
\begin{enumerate}
\item
It has the following representations
\begin{equation}\label{psi_series1}
\psi_p(x)=\ln p-\sum_{k=0}^{p}\frac{1}{x+k} =\ln p -\int_{0}^{\infty}\frac{1-e^{-(p+1)t}}{1-e^{-t}}e^{-xt}dt.
\end{equation}
\item
It is increasing on $(0,\infty)$ and $\psi'_p$ is completely monotonic on $(0,\infty)$.
\end{enumerate}

\par
In~\cite[pp.~374\nobreakdash--375, Theorem~1]{psi-alzer}, it was proved that the function
\begin{equation}\label{theta-alpha}
\theta_\alpha(x)=x^\alpha[\ln x-\psi(x)]
\end{equation}
is completely monotonic on $(0,\infty)$ if and only if $\alpha\le1$.

For the history, backgrounds, applications and alternative proofs of this conclusion, please refer to~\cite{theta-new-proof.tex-BKMS}, \cite[p.~8, Section~1.6.6]{bounds-two-gammas.tex} and closely-related references therein.
\par

A positive function $f$ is said to be {\em logarithmically completely monotonic} \cite{MC-Krashiqi-Mansour-Shabani}  on an open interval $I$, if $f$ satisfies
\begin{equation}\label{eq01}
(-1)^n[\ln f(x)]^{(n)}\geq 0, (x\in I, n=1,2,\ldots ).
\end{equation}
If the inequality (1.2) is strict, then $f$ is said to be {\em strictly logarithmically completely monotonic}.
Let $C$ and $L$ denote the set of completely monotonic functions and the set of logarithmically completely monotonic functions, respectively. The relationship between completely monotonic functions and logarithmically completely monotonic functions can be presented \cite{MC-Krashiqi-Mansour-Shabani} by $L\subset C$.

\section{Main results}

In \cite{BG} has been posed the following conjecture.

\begin{conjecture}
The function

\begin{equation}
q(t):=t^{t(\psi(t)-\log t)-\gamma}
\end{equation}

is logarithmically completely monotonic on $(0,\infty)$.
\end{conjecture}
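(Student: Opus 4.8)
The plan is to reduce the statement to a single complete--monotonicity assertion and then to an explicit positivity of a density. Since $q>0$, the definition \eqref{eq01} shows that $q$ is logarithmically completely monotonic on $(0,\infty)$ exactly when $-[\ln q(t)]'$ is completely monotonic there. Abbreviate $\theta_1(t)=t[\ln t-\psi(t)]$ and $\theta_0(t)=\ln t-\psi(t)$, so that $t(\psi(t)-\ln t)=-\theta_1(t)$ and hence $\ln q(t)=-[\theta_1(t)+\gamma]\ln t$. A direct differentiation then gives
\[
-[\ln q(t)]'=\theta_1'(t)\ln t+\frac{\theta_1(t)}{t}+\frac{\gamma}{t}=\theta_1'(t)\ln t+\theta_0(t)+\frac{\gamma}{t}.
\]
By the theorem recalled around \eqref{theta-alpha}, applied with $\alpha=1$ and $\alpha=0$, both $\theta_1$ and $\theta_0$ are completely monotonic on $(0,\infty)$; consequently $\theta_0(t)$ and $\gamma/t$ are already completely monotonic, and $-\theta_1'$ is completely monotonic as the derivative of a completely monotonic function. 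The only troublesome summand is $\theta_1'(t)\ln t$: it is a product of a (sign-reversed) completely monotonic factor with $\ln t$, which is \emph{not} sign-definite on $(0,\infty)$.

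The next step is to assemble all three terms into one Laplace transform. Using the Binet-type representation $\theta_0(t)=\ln t-\psi(t)=\int_0^\infty\phi(s)e^{-ts}\,ds$ with $\phi(s)=\frac{1}{1-e^{-s}}-\frac1s$ (a positive, increasing kernel with $\phi(0^+)=\tfrac12$, $\phi(\infty)=1$), an integration by parts yields $\theta_1(t)=\tfrac12+\int_0^\infty\phi'(s)e^{-ts}\,ds$ and hence $\theta_1'(t)=-\int_0^\infty a(s)e^{-ts}\,ds$ with $a(s)=s\phi'(s)\ge0$. Combining this with the Frullani formula $-\ln t=\int_0^\infty\frac{e^{-tr}-e^{-r}}{r}\,dr$, interchanging the order of integration, and carrying out the (convergent) cancellation of the logarithmic divergence at $r\to0^+$, one represents the whole expression as $-[\ln q(t)]'=\int_0^\infty e^{-tw}\,\omega(w)\,dw$ with
\[
\omega(w)=\int_0^w\frac{a(s)-a(w)}{w-s}\,ds+a(w)\,(\gamma+\ln w)+\phi(w)+\gamma ,\qquad a(s)=s\phi'(s).
\]
By the Bernstein--Widder theorem \eqref{berstein-1}, the desired logarithmic complete monotonicity on $(0,\infty)$ is then equivalent to the single inequality $\omega(w)\ge0$ for all $w>0$.

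The \textbf{main obstacle} is precisely this positivity of $\omega$. The endpoints are reassuring: since $a(w)\sim w/12\to0$ and $w\ln w\to0$ as $w\to0^+$, one gets $\omega(0^+)=\phi(0^+)+\gamma=\tfrac12+\gamma>0$, while the decay $a(w)\sim w^{-1}$ gives $\omega(\infty)=\phi(\infty)+\gamma=1+\gamma>0$. The difficulty lies in the intermediate range: on $(0,1)$ the term $a(w)(\gamma+\ln w)$ is negative, and because $a(s)=s\phi'(s)$ rises from $0$ and then returns to $0$ it is non-monotone, so the Cauchy-type integral $\int_0^w\frac{a(s)-a(w)}{w-s}\,ds$ has indefinite sign. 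I would handle these negative contributions by establishing sharp structural estimates on $\phi$ — in particular a uniform bound $0<\phi'(s)\le\tfrac1{12}$ and suitable convexity/monotonicity of $a$ — and then dominating $|a(w)(\gamma+\ln w)|$ together with the negative part of the convolution integral by the guaranteed positive mass $\phi(w)+\gamma\ge\tfrac12+\gamma$. This is exactly the point at which the easy $(0,1)$ argument breaks down: there one simply uses that $-\ln t$ is itself completely monotonic for $t<1$, so that $\theta_1'(t)\ln t$ is a product of completely monotonic functions; for $t>1$ that structure is lost, and only the global positivity of $\omega$ can rescue the conclusion on all of $(0,\infty)$.
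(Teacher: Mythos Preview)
First, a point of orientation: the paper does \emph{not} prove the conjecture on $(0,\infty)$. It merely records it as Conjecture~2.1 and then establishes the restricted statement on $(0,1)$ (Theorem~2.2). So there is no ``paper's own proof'' of the full $(0,\infty)$ statement to compare against; your target is strictly more ambitious than what the paper actually delivers.

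With that said, your proposal contains a genuine gap exactly where you flag it. You correctly reduce logarithmic complete monotonicity to the complete monotonicity of $-[\ln q(t)]'$, and your Laplace-transform bookkeeping leading to
\[
-[\ln q(t)]'=\int_0^\infty e^{-tw}\,\omega(w)\,dw,\qquad
\omega(w)=\int_0^w\frac{a(s)-a(w)}{w-s}\,ds+a(w)(\gamma+\ln w)+\phi(w)+\gamma,
\]
is a sensible way to organise the problem. But the Bernstein--Widder reduction simply trades one hard question (complete monotonicity) for another (positivity of $\omega$), and the latter is precisely what you do not prove. The paragraph beginning ``I would handle these negative contributions by\ldots'' is a plan, not an argument: neither the claimed bound $0<\phi'(s)\le\tfrac1{12}$ (which would require its own proof) nor the vague ``suitable convexity/monotonicity of $a$'' is shown to actually dominate the negative part of $a(w)(\gamma+\ln w)$ plus the sign-indefinite integral on any specific range of $w$. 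Until that estimate is carried out with explicit constants and a covering of all $w>0$, the proof is incomplete; as written it establishes nothing beyond the $(0,1)$ case.

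On the $(0,1)$ case itself, your final remark matches the paper's proof in spirit but with one extra derivative. The paper writes $\ln q(t)=\theta_1(t)\cdot(-\ln t)+(-\gamma\ln t)$ and observes that on $(0,1)$ this is a sum of a product of two completely monotonic functions with another completely monotonic function, hence $\ln q$ itself is completely monotonic there, which immediately gives the logarithmic complete monotonicity. You instead differentiate once and write $-[\ln q]'=(-\theta_1')(-\ln t)+\theta_0+\gamma/t$, using Alzer's theorem at $\alpha=0,1$ to get both $\theta_0$ and $-\theta_1'$ completely monotonic. Both routes are valid on $(0,1)$; the paper's is marginally simpler because it avoids invoking $\theta_0$ separately. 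Neither extends to $t>1$, for the reason you yourself identify: $-\ln t$ loses complete monotonicity (indeed, positivity) past $t=1$.
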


\begin{theorem}
The function

\begin{equation}
q(t):=t^{t(\psi(t)-\log t)-\gamma}
\end{equation}

is logarithmically completely monotonic on $(0,1)$.
\end{theorem}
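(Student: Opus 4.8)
The plan is to show directly that $t\mapsto\ln q(t)$ is completely monotonic on $(0,1)$. This is enough: complete monotonicity of $\ln q$ gives in particular $(-1)^{n}[\ln q(t)]^{(n)}\ge 0$ for every $n\ge 1$, which is precisely condition~\eqref{eq01}; and since $t>0$ on $(0,1)$, the function $q$ is positive there, so that logarithmic complete monotonicity is meaningful. (Equivalently one could prove that $-[\ln q(t)]'$ is completely monotonic, the usual reformulation of~\eqref{eq01}, but it is cleaner to work with $\ln q$ itself.)

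First I would rewrite the exponent through the function $\theta_{1}$ of~\eqref{theta-alpha}. Since $\theta_{1}(t)=t[\ln t-\psi(t)]$, one has $t(\psi(t)-\log t)-\gamma=-\bigl(\theta_{1}(t)+\gamma\bigr)$, and therefore
\[
\ln q(t)=\bigl[t(\psi(t)-\log t)-\gamma\bigr]\ln t=\bigl[\theta_{1}(t)+\gamma\bigr]\cdot\bigl(-\ln t\bigr),
\]
so $\ln q$ is the product of the two factors $\theta_{1}(t)+\gamma$ and $-\ln t=\ln(1/t)$.

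Next I would check that each factor is completely monotonic on $(0,1)$. By the result of Alzer recalled above (see~\cite{psi-alzer}), $\theta_{1}$ is completely monotonic on $(0,\infty)$ --- the admissible boundary case $\alpha=1$ --- hence on $(0,1)$; adding the positive constant $\gamma$ preserves complete monotonicity, so $\theta_{1}(t)+\gamma$ is completely monotonic on $(0,1)$. For the second factor, a direct computation gives $(-\ln t)^{(n)}=(-1)^{n}(n-1)!\,t^{-n}$ for $n\ge 1$, so that $(-1)^{n}(-\ln t)^{(n)}=(n-1)!\,t^{-n}\ge 0$, while $-\ln t>0$ holds precisely on $(0,1)$; thus $\ln(1/t)$ is completely monotonic on $(0,1)$. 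Finally I would use that the product of two completely monotonic functions is again completely monotonic: by the Leibniz rule,
\[
(-1)^{n}(fg)^{(n)}=\sum_{k=0}^{n}\binom{n}{k}\bigl[(-1)^{k}f^{(k)}\bigr]\bigl[(-1)^{n-k}g^{(n-k)}\bigr]\ge 0 .
\]
Applying this with $f=\theta_{1}+\gamma$ and $g=-\ln t$ shows $\ln q$ is completely monotonic on $(0,1)$, which proves the theorem.

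The only genuine obstacle is the second factor, and it is exactly what confines the argument to $(0,1)$: $\ln(1/t)$ is not completely monotonic on $(1,\infty)$ --- indeed not even nonnegative there --- so on $(1,\infty)$ the product decomposition no longer forces the sign pattern in~\eqref{eq01}, and closing this gap is precisely what the full conjecture of~\cite{BG} over $(0,\infty)$ demands. The rest of the argument is routine once Alzer's theorem on $\theta_{\alpha}$ is in hand.
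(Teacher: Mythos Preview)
Your proof is correct and follows essentially the same approach as the paper: decompose $\ln q(t)$ as a product involving Alzer's completely monotonic function $\theta_1(t)=t(\ln t-\psi(t))$ and the factor $-\ln t$, which is completely monotonic only on $(0,1)$, then invoke closure of complete monotonicity under products. The only cosmetic difference is that you factor as $(\theta_1(t)+\gamma)\cdot(-\ln t)$, while the paper writes the equivalent $\theta_1(t)\cdot(-\ln t)+\gamma\cdot(-\ln t)$ and appeals separately to closure under products and under nonnegative linear combinations.
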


\begin{proof}
One easily finds that
\begin{equation}
\log q(t)=-t\cdot(\log t-\psi(t)) \log t - \gamma \cdot\log t
\end{equation}
Let $\displaystyle h(t)=-\gamma \cdot\log t, g(t)=-\log t; f(t)=t\cdot(\log t-\psi(t))$.
Alzer \cite{psi-alzer} proved that the function $f(t)=t\cdot(\log t-\psi(t))$ is strictly completely monotonic on $(0,\infty)$. The functions $g(t)=-\log t$ and $h(t)=-\gamma \cdot\log t$ are also strictly completely monotonic on $(0,1)$. We complete the proof by recalling the results from \cite{widder}.

1) The product of two completely monotone functions is completely monotonic function.

2) A non-negative finite linear combination of completely monotone functions
is completely monotonic function.
\end{proof}

We extend the previous result to the following theorem.
\begin{theorem}
The function
\begin{equation}
q_p(t):=t^{t\cdot\big(\psi_p(t)-\log \frac{pt}{t+p+1}\big)-\gamma}
\end{equation}
is logarithmically completely monotonic on $(0,1)$.
\end{theorem}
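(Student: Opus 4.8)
The plan is to follow the proof of the previous theorem almost verbatim, the only new ingredient being a $p$-analogue of Alzer's complete monotonicity result. First I expand the exponent:
\begin{equation*}
\log q_p(t)=\Bigl[t\bigl(\psi_p(t)-\log\tfrac{pt}{t+p+1}\bigr)-\gamma\Bigr]\log t=F_p(t)\cdot(-\log t)+\gamma\cdot(-\log t),
\end{equation*}
where I set $F_p(t):=t\bigl(\log\tfrac{pt}{t+p+1}-\psi_p(t)\bigr)$. Exactly as in the previous theorem, $-\log t$ and $-\gamma\log t$ are completely monotonic on $(0,1)$: there $-\log t>0$ and $(-1)^{n}(-\log t)^{(n)}=(n-1)!\,t^{-n}>0$ for $n\ge1$. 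Hence, once $F_p$ is shown to be completely monotonic on $(0,\infty)$ (so, in particular, on $(0,1)$), the two facts recalled from \cite{widder} — that a product of completely monotonic functions is completely monotonic, and that a nonnegative linear combination of completely monotonic functions is completely monotonic — give that $\log q_p$ is completely monotonic on $(0,1)$. Since $q_p>0$ on $(0,1)$ (it is a real power of a positive number), this is exactly the assertion that $q_p$ is logarithmically completely monotonic on $(0,1)$.

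Thus the whole matter reduces to the complete monotonicity of $F_p$, the natural $p$-analogue of Alzer's function $\theta_1(x)=x(\log x-\psi(x))$ (indeed $F_p\to\theta_1$ as $p\to\infty$), and I would prove it through an integral representation. Combining the representation of $\psi_p$ in \eqref{psi_series1} with Frullani's formula $\log\tfrac{pt}{t+p+1}=\log p-\int_0^\infty\frac{1-e^{-(p+1)u}}{u}\,e^{-tu}\,du$, the two $\log p$ terms cancel and one obtains
\begin{equation*}
\log\tfrac{pt}{t+p+1}-\psi_p(t)=\int_0^\infty e^{-tu}\,\nu_p(u)\,du,\qquad \nu_p(u):=\bigl(1-e^{-(p+1)u}\bigr)\Bigl(\frac{1}{1-e^{-u}}-\frac1u\Bigr)\ge0.
\end{equation*}
Multiplying by $t$, writing $t\,e^{-tu}=-\partial_u e^{-tu}$ and integrating by parts yields $F_p(t)=\int_0^\infty e^{-tu}\,\nu_p'(u)\,du$; the boundary terms $\bigl[e^{-tu}\nu_p(u)\bigr]_0^\infty$ vanish since $\nu_p(u)=O(u)$ as $u\to0^+$ and $\nu_p$ is bounded while $e^{-tu}\to0$ as $u\to\infty$. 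By the Bernstein–Widder theorem \eqref{berstein-1} it now suffices to check that $\nu_p'\ge0$ on $(0,\infty)$, i.e. that $\nu_p$ is nondecreasing; and $\nu_p$ is the product of the two nonnegative nondecreasing functions $u\mapsto 1-e^{-(p+1)u}$ and $\mu(u):=\frac{1}{1-e^{-u}}-\frac1u$, hence nondecreasing.

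The one genuinely technical point — and the main obstacle — is the monotonicity of $\mu$ on $(0,\infty)$. This is precisely the lemma underpinning Alzer's theorem in \cite{psi-alzer}, from which it can be quoted directly; alternatively it follows from the identity $\mu(u)=\tfrac12+\tfrac12\bigl(\coth\tfrac u2-\tfrac2u\bigr)$ together with the classical fact that $v\mapsto\coth v-\tfrac1v$ is increasing on $(0,\infty)$. Everything else is the routine bookkeeping already carried out for the previous theorem, and the conclusion then follows as described in the first paragraph.
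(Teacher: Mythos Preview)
Your proof is correct and follows essentially the same approach as the paper: the same decomposition $\log q_p(t)=F_p(t)\,(-\log t)+\gamma\,(-\log t)$, the same appeal to the Widder facts on products and nonnegative combinations of completely monotonic functions, and the same key input that $F_p(t)=t\bigl(\log\tfrac{pt}{t+p+1}-\psi_p(t)\bigr)$ is completely monotonic on $(0,\infty)$. The only difference is that the paper simply cites Krasniqi--Qi \cite{VQ} for this last fact, whereas you supply a self-contained proof via the Frullani/integral representation, integration by parts, and the monotonicity of $\mu(u)=\frac{1}{1-e^{-u}}-\frac1u$; this makes your argument longer but independent of the external reference.
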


\begin{proof}
One easily finds that
\begin{equation}
\log q_p(t)=-t\big(\log \frac{pt}{t+p+1}-\psi(t)\big) \log t - \gamma \cdot\log t
\end{equation}

Let $\displaystyle h(t)=-\gamma \cdot\log t, g(t)=-\log t; f_p(t)=t\cdot\big(\log \frac{pt}{t+p+1}-\psi_p(t)\big)$.

Krasniqi and Qi  \cite{VQ} proved that the function $f_p(t)=t\cdot \big(\log \frac{pt}{t+p+1}-\psi_p(t)\big)$ is strictly completely monotonic on $(0,\infty)$. The functions $g(t)=-\log t$ and $h(t)=-\gamma \cdot\log t$ are also strictly completely monotonic on $(0,1)$.

By refering the same results from \cite{widder} as in previous proof, we complete the proof.
\end{proof}

\begin{remark}
Letting $p\to\infty$ in Theorem 2.3 , we obtain Theorem 2.2 .
\end{remark}
At the end we pose the following open problem:

\begin{problem}
Let $\psi_q(t)$ be $q$-Digamma function. Find the family of functions $\theta(t)$  such that
\begin{equation}
\displaystyle q(t):=t^{t\cdot(\psi_q(t)-\log \theta(t))-\gamma}
\end{equation}
is logarithmically completely monotonic on $(0,\infty)$.
\end{problem}

\begin{remark}
This is a corrected version of paper \cite{valimendi}
\end{remark}

%------------------------------------------------------


\begin{thebibliography}{100}



\bibitem{abram}
M. Abramowitz and I. A. Stegun (Eds), \textit{Handbook of Mathematical Functions with
Formulas, Graphs, and Mathematical Tables}, National Bureau of Standards, Applied
Mathematics Series \textbf{55}, 9th printing, Washington, 1970.

\bibitem{psi-alzer}
H. Alzer, \textit{On some inequalities for the gamma and psi functions}, Math. Comp. \textbf{66} (1997), no.~217, 373\nobreakdash--389.

\bibitem{Apostol-Number}
T. M. Apostol, \textit{Introduction to Analytic Number Theory}, Springer-Verlag, 1976.

\bibitem{theta-new-proof.tex-BKMS}
B.-N. Guo and F. Qi, \textit{Two new proofs of the complete monotonicity of a function involving the psi function}, Bull. Korean Math. Soc. \textbf{47} (2010), no.~1, 103\nobreakdash--111; Available online at \url{http://dx.doi.org/10.4134/BKMS.2010.47.1.103}.

\bibitem{BG}
B. N. Guo, Y. J. Zhang, F. Qi, Refinements and sharpenings of some double inequalities for bounding the gamma function, {\em J. Ineq. Pure Appl. Math.}
{\bf 9}(1) (2008) Article 17.

\bibitem{best-constant-one-simple.tex}
B.-N. Guo, A.-Q. Liu and F. Qi, \textit{Monotonicity and logarithmic convexity of three functions involving exponential function}, J. Korea Soc. Math. Educ. Ser. B Pure Appl. Math. \textbf{15} (2008), no.~4, 387\nobreakdash--392.

\bibitem{emv-log-convex-simple.tex}
B.-N. Guo and F. Qi, \textit{A simple proof of logarithmic convexity of extended mean values}, Numer. Algorithms \textbf{52} (2009), 89\nobreakdash--92; Available online at \url{http://dx.doi.org/10.1007/s11075-008-9259-7}.

\bibitem{remiander-Guo-Qi-Tamsui}
S. Guo and F. Qi, \textit{A class of completely monotonic functions related to the remainder of Binet's formula with applications}, Tamsui Oxf. J. Math. Sci. \textbf{25} (2009), no.~1, 9\nobreakdash--14.

\bibitem{valimendi}

V. Krasniqi and A. Sh. Shabani,On a conjecture of a logarithmically completely monotonic function, Aust. J. Math. Anal. Appl. 11 (2014), 1-5.

\bibitem{MC-Krashiqi-Mansour-Shabani}
V. Krasniqi, T. Mansour and A. Sh. Shabani, \textit{Some monotonicity properties and inequalities for $\Gamma$ and $\zeta$- functions}, Math. Commun. \textbf{15} (2010), no.~2, 365\nobreakdash--376.

\bibitem{VQ}
V. Krasniqi and F. Qi, Complete monotonicity of a function involving the $\boldsymbol{p}$-psi function and alternative proofs, http://arxiv.org/pdf/1105.4928.pdf

\bibitem{VA}
V. Krasniqi and A. Sh. Shabani, {\em Convexity properties and inequalities for a generalized gamma function}, Appl. Math. E-Notes \textbf{10} (2010), 27\nobreakdash--35.

\bibitem{best-constant-one.tex}
A.-Q. Liu, G.-F. Li, B.-N. Guo and F. Qi, \textit{Monotonicity and logarithmic concavity of two functions involving exponential function}, Internat. J. Math. Ed. Sci. Tech. \textbf{39} (2008), no.~5, 686\nobreakdash--691.

\bibitem{mpf-1993}
D. S. Mitrinovi\'c, J. E. Pe\v{c}ari\'c and A. M. Fink, \textit{Classical and New
Inequalities in Analysis}, Kluwer Academic Publishers, Dordrecht/Boston/London, 1993.

\bibitem{bounds-two-gammas.tex}
F. Qi, \textit{Bounds for the ratio of two gamma functions}, J. Inequal. Appl. \textbf{2010} (2010), Article ID 493058, 84~pages; Available online at \url{http://dx.doi.org/10.1155/2010/493058}.

\bibitem{Cheung-Qi-Rev.tex}
F. Qi, P. Cerone, S. S. Dragomir and H. M. Srivastava, \textit{Alternative proofs for monotonic and logarithmically convex properties of one-parameter mean values}, Appl. Math. Comput. \textbf{208} (2009), no.~1, 129\nobreakdash--133; Available online at \url{http://dx.doi.org/10.1016/j.amc.2008.11.023}.

\bibitem{Extended-Binet-remiander-comp.tex}
F. Qi and B.-N. Guo, \textit{Some properties of extended remainder of Binet's first formula for logarithm of gamma function}, Math. Slovaca \textbf{60} (2010), no.~4, 461\nobreakdash--470; Available online at \url{http://dx.doi.org/10.2478/s12175-010-0025-7}.

\bibitem{Schilling-Song-Vondracek-2010}
R. L. Schilling, R. Song and Z. Vondra\v{c}ek, \textit{Bernstein Functions}, de Gruyter Studies in Mathematics \textbf{37}, De Gruyter, Berlin, Germany, 2010.

\bibitem{widder}
D. V. Widder, \textit{The Laplace Transform}, Princeton University Press, Princeton, 1946.

\bibitem{best-constant-one-simple-real.tex}
Sh.-Q. Zhang, B.-N. Guo and F. Qi, \textit{A concise proof for properties of three functions involving the exponential function}, Appl. Math. E-Notes \textbf{9} (2009), 177\nobreakdash--183.

\end{thebibliography}
\end{document}